\documentclass[10pt,twocolumn]{article}

\usepackage[letterpaper,margin=0.75in]{geometry}
\usepackage[T1]{fontenc}
\usepackage[utf8]{inputenc}

\usepackage{amsmath}
\usepackage{amssymb}
\usepackage{mathtools}
\usepackage{amsthm}

\usepackage{newtxtext}
\usepackage{newtxmath}

\usepackage{graphicx}
\usepackage{subcaption}
\usepackage[dvipsnames]{xcolor}
\usepackage{microtype}
\usepackage{authblk}

\mathtoolsset{showonlyrefs}

\newtheorem{theorem}{Theorem}

\newtheorem{lemma}{Lemma}

\theoremstyle{definition}
\newtheorem{definition}{Definition}
\newtheorem{assumption}{Assumption}

\theoremstyle{remark}
\newtheorem{remark}{Remark}

\newcommand{\R}{\mathbb{R}}
\newcommand{\RR}{\mathbb{R}}
\newcommand{\EE}{\mathbb{E}}
\newcommand{\cP}{\mathcal{P}}
\newcommand{\drift}{b}
\newcommand{\tcost}{g}

\usepackage[
  colorlinks=true,
  linkcolor=blue,
  urlcolor=blue,
  citecolor=blue
]{hyperref}

\title{Extended Mean Field Control Games with Moment Interactions:
General Framework and Linear-Quadratic Model}

\author[1]{Zhongyuan Cao}
\author[1]{Mathieu Lauri{\`e}re}
\author[1]{Andrew Shi}
\author[1]{Jiefei Yang}

\affil[1]{NYU-ECNU Institute of Mathematical Sciences at NYU Shanghai,
567 West Yangsi Road, Shanghai 200126, People's Republic of China\\
\texttt{\{zc3151, mathieu.lauriere, andrewshi, jy5595\}@nyu.edu}}

\date{}

\begin{document}
\maketitle

\begin{abstract}
	Mean field control games (MFCGs) provide a framework for studying non-cooperative games involving groups of cooperative players in the limit as both the number of players and the number of groups go to infinity. 
    This class of games includes mean field games (MFGs) and mean field control (MFC) problems as special cases, corresponding to purely non-cooperative and purely cooperative settings, respectively. 
    We incorporate interactions through the mean of actions, hence the terminology of extended MFCGs. 
    We first show that the equilibrium control can be described using a coupled system of partial differential equations consisting of a Hamilton-Jacobi-Bellman equation and a Fokker-Planck equation. 
    After introducing the general framework, we focus on a linear-quadratic (LQ) structure. 
    We show that the equilibrium can be reduced to a system of ODEs, generalizing those obtained for MFGs and MFC problems. 
    We then provide two numerical examples illustrating specific features of MFCGs.
\end{abstract}

\section{Introduction}
\label{sec:introduction}

Mean Field Games (MFGs) and Mean Field Control (MFC) have been highly active areas of research since the foundational work of Lasry and Lions \cite{lasry2007MFG} and Huang, Caines, and Malhamé \cite{huang2007MFG}.
These frameworks capture the asymptotic behavior of large populations of interacting agents in purely non-cooperative (MFG) and purely cooperative (MFC) regimes.
However, many real-world systems do not fall strictly into either extreme.
Instead, they feature hierarchical structures where players coordinate their strategies within distinct groups, while the groups themselves compete non-cooperatively. 
When the number of groups is finite and each group is infinite, such games have been referred to as \emph{mean field type games}; see, e.g.,~\cite{djehiche2016mean,bacsar2026mean}. 
When the number of groups tends to infinity, this gives rise to MFGs in which each player solves an MFC problem.
This motivates the study of \textit{Mean Field Control Games} (MFCGs), introduced in~\cite{angiuli2023reinforcement}, which describe Nash equilibria among collaborating groups.
Recent literature has begun exploring models \cite{angiuli2022bank} and algorithms \cite{angiuli2026analysis} for this intermediate regime. 
Along this line, the authors of \cite{carmona2023nash} considered games that interpolate between MFC and MFG, and the authors of \cite{dayanikli2025cooperation} proposed models with a mixture of cooperation and non-cooperation.

Concurrently, a second major evolution in mean field theory has been the development of ``extended'' models.
Standard mean field frameworks assume that agents interact exclusively through the empirical distribution of their states. 
In many applications, however, agents' costs and dynamics depend explicitly on the \textit{actions} or \textit{controls} of the broader population.
To address this, extended MFGs were formalized in \cite{gomes2014extended, acciaio2019extended} to allow the Lagrangian to depend on the joint distribution of states and controls. 
This class has also been referred to as MFGs of controls; see~\cite{cardaliaguet2018mean,kobeissi2022classical} for analytical results and~\cite{achdou2021mean} for numerical approximations.
This has sparked significant recent work on extended mean field problems from both theoretical \cite{djete2022extended, djete2022mckean, bo2024extended, bensoussan2025extended} and numerical perspectives \cite{picarelli2025extended, reisinger2025convergence}. 
Related LQ models with state--control mean interactions have been
studied in~\cite{li2022dynamic,li2023linear,bensoussan2025linear}.
Most closely related, \cite{li2026generalized} studies a ``mean-field type game of controls'' whose representative player internalizes its own state and control expectations while treating the population averages as fixed, yielding the same nested MFC-best-response and mean-field-consistency structure as an LQ MFCG.

Our contribution is to formulate this structure first at the level
of a general nonlinear, moment-based HJB--FP system and then specialize
it to a multidimensional LQ model with distinct local and population
interaction coefficients. This differs from, e.g.,
\cite{graber2016linear,bensoussan2025linear}, which treats extended MFG and MFC
separately, and complements the common-noise FBSDE and master-equation
analysis of~\cite{li2026generalized}.
In particular, compared with \cite{li2022dynamic}, which considers a particular linear-quadratic control-average model, our formulation is developed first at the general PDE level and subsequently specialized to the LQ setting. 
The MFCG framework contains an interpolation of non-cooperative and cooperative regimes through separate group-level and population-level interaction terms, whereas the interpolation in \cite{carmona2023nash} is done at the level of the cost functional.

Specifically, we first formulate the general extended MFCG problem and characterize its equilibrium via a coupled system of partial differential equations, consisting of a Hamilton-Jacobi-Bellman (HJB) equation and a Fokker-Planck (FP) equation (see Theorem~\ref{thm:general_mfcg}).
Second, we specialize the framework to a Linear-Quadratic (LQ) structure.
We demonstrate that in the LQ setting, the PDE system can be reduced to a system of ordinary differential equations (ODEs) featuring Riccati-type equations (see Theorem~\ref{thm:lq_mfcg}).
Finally, we propose a model for LQ MFCGs with interpolation parameters that blend global and group-level benchmarks, and we illustrate the model through numerical experiments.

\section{Extended MFCG Model}
\label{sec:extended-mfcg-model}
 
In this section, we formulate the extended MFCG problem mathematically.

\noindent
\textbf{General notation.} Let $d$ be the state dimension, $k$ be the action dimension, and $p$ be the dimension of the interaction terms. 
$\cP(E)$ denotes the set of probability measures on $E$.

\noindent
\textbf{Individual dynamics.} The dynamics of a representative agent interact with the population through a time-dependent interaction term $\bar{\varphi}: [0,T] \to \RR^p$. 
If the agent uses a control $v:[0,T]\times \R^d \to \RR^k$, then we denote their state at time $t$ by $X^{v, \bar\varphi}_t \in \RR^d$. 
We denote by $m^{v, \bar\varphi}_t \in \cP(\R^d)$ the density of $X^{v, \bar\varphi}_t$.
We assume that the evolution of this state involves its own distribution through a term of the form:
\begin{align}
	\tilde{\varphi}_t^{v, \bar{\varphi}}
	 & = \EE[ \varphi(t,X^{v, \bar{\varphi}}_t, v(t,X^{v, \bar{\varphi}}_t))]
	\\
	 & = \int \varphi(t,x', v(t,x')) m^{v, \bar{\varphi}}(t,x') dx',
	\label{eq:tilde-phi-def}
\end{align}
where $\varphi: [0,T] \times \R^d \times \R^k \to \R^p$ is a given interaction function. The quantities $\bar{\varphi}$ and $\tilde{\varphi}$ play different roles. 
The interaction term $\tilde{\varphi}^{v, \bar{\varphi}}$ is computed from the state distribution generated by the representative group when it uses the control $v$, whereas $\bar{\varphi}$ is computed from the population distribution when every group uses the population strategy $\bar{v}$. 
At equilibrium, where $v = \bar{v}$, the two quantities coincide.
The dynamics are:
\begin{equation} \label{eq:state-dynamics}
	dX^{v, \bar{\varphi}}_t=\drift(t, X^{v, \bar{\varphi}}_t, v(t, X^{v, \bar{\varphi}}_t), \tilde{\varphi}_t^{v, \bar{\varphi}}, \bar{\varphi}_t)dt + \sigma(t, X^{v, \bar{\varphi}}_t)dW_t,
\end{equation}
with initial condition $X^{v, \bar{\varphi}}_0 \sim m_0$ for a given $m_0$. 
Note that these dynamics are of McKean-Vlasov (MKV) type. 

The density $m^{v, \bar{\varphi}}_t$ of $X^{v, \bar{\varphi}}_t$ satisfies the following Kolmogorov-Fokker-Planck (KFP) equation:
\begin{multline} \label{eq:KFP-eqn}
	\partial_t m^{v,\bar{\varphi}} + A^*m^{v,\bar{\varphi}} + \operatorname{div}\big(\drift(t, x, v, \tilde{\varphi}_t^{v,\bar{\varphi}}, \bar{\varphi}_t)m^{v,\bar{\varphi}}\big) = 0,
\end{multline}
with initial condition $m^{v,\bar{\varphi}}(0) = m_0$, where the second-order differential operator and its adjoint are defined by:
$
	Au(t,x) = -\operatorname{Tr}(a(t,x) D_x^2u(t,x)),
	A^*u(t,x) = -\sum_{i,j=1}^d \partial_{ij}\left( a_{ij}(t,x) u(t,x)\right),
$
with $a(t,x) = \frac{1}{2}\sigma\sigma^\top(t,x)$.

\noindent
\textbf{Population dynamics.} We assume that the interactions with the population are of the same form as~\eqref{eq:tilde-phi-def}. 
They could occur through a function different from $\varphi$, but, for simplicity, we consider the same function for individual and population interactions.
Hence, if the whole population uses control $\bar{v}$, the population density, denoted by $m^{\bar{v}}$, solves the KFP:
\begin{multline} \label{eq:KFP-eqn-pop}
	\partial_t m^{\bar{v}} + A^*m^{\bar{v}} + \operatorname{div}\big(\drift(t, x, \bar{v}, \tilde{\varphi}_t^{\bar{v}, \bar{\varphi}^{\bar{v}}}, \bar{\varphi}_t^{\bar{v}})m^{\bar{v}}\big) = 0,
\end{multline}
with initial condition $m^{\bar{v}}(0) = m_0$, where
\begin{align}
	\bar{\varphi}_t^{\bar{v}} & = \int  \varphi(t,x', \bar{v}(t,x')) m^{\bar{v}}(t,x') dx'.
\end{align}
When the population uses control $\bar{v}$, the term $\bar{\varphi}_t$ in~\eqref{eq:state-dynamics} is replaced by $\bar{\varphi}^{\bar{v}}_t$. Furthermore, to alleviate the notation, we denote $X^{v,\bar{v}}_t = X^{v,\bar{\varphi}^{\bar{v}}}_t$, $m^{v,\bar{v}}_t = m^{v,\bar{\varphi}^{\bar{v}}}_t$, $\tilde{\varphi}^{v,\bar{v}}_t = \tilde{\varphi}^{v,\bar{\varphi}^{\bar{v}}}_t$.

\noindent
\textbf{Cost function.}
If the population uses control $\bar{v}$ and the agent uses control $v$, the agent's cost is defined as follows; the integrals with respect to $m^{v,\bar v}$ can be interpreted as expectations with respect to $X^{v,\bar v}$:
\begin{align}
	J(v; \bar{v})
	 & = \int_0^T \int_{\R^d} f(t, x, v(t,x), \tilde{\gamma}_t^{v,\bar{v}}, \bar{\gamma}_t^{\bar{v}}) m^{v,\bar{v}}(t,x) dxdt
	\notag
	\\
	 & \qquad + \int_{\R^d} \tcost(x, \tilde{\psi}_T^{v,\bar{v}}, \bar{\psi}_T^{\bar{v}})m^{v,\bar{v}}(T,x)dx,
	\label{eq:objective-functional-1}
\end{align}
where the interaction terms are:
\begin{equation*}
	\begin{aligned}
		\tilde{\gamma}_t^{v, \bar{v}} & = \int_{\R^d}\gamma(t,x', v(t,x')) m^{v, \bar{v}}(t,x')dx', \\
		\bar{\gamma}_t^{\bar{v}}      & = \int_{\R^d} \gamma(t, x', \bar{v}(t,x'))m^{\bar{v}}(t,x')dx',     \\
		\tilde{\psi}_T^{v, \bar{v}}   & = \int_{\R^d}\psi(x') m^{v,\bar{v}}(T,x')dx',               \\
		\bar{\psi}_T^{\bar{v}}        & = \int_{\R^d} \psi(x')m^{\bar{v}}(T,x')dx'
	\end{aligned}
\end{equation*}
for some given functions $\gamma: [0,T] \times \R^d \times \R^k \to \R^p$ and $\psi: \R^d \to \R^p$.
The following solution concept is based on the idea of Nash equilibrium.
\begin{definition}[Extended MFCG Equilibrium] \label{def:equilibrium}
	A control $\hat{v}$ is an equilibrium for the extended MFCG problem if $\hat{v}$ is a minimizer of
	\(
	v \mapsto J(v; \hat{v}).
	\)
\end{definition}
In other words, a control is an equilibrium if it is an optimal control for a player facing a population in which all the players use this control. 
This can be interpreted as a fixed-point problem: first, given the population's evolution, we find an optimal control by solving an MFC problem; second, the population's evolution should be consistent with this optimal control.

\section{MFCG Equilibrium}
\label{sec:equilibrium}

In this section, we characterize the MFCG equilibrium by a forward-backward PDE system.

When the population's evolution is given, the problem for a representative player in MFCG reduces to an MFC problem. 
However, this problem is of ``extended'' type in the sense that it involves the action distribution. 
To the best of our knowledge, extended MFC problems have not yet been studied from a PDE perspective. 
For this reason, we start by establishing Lemma~\ref{lem:PDE-extended-MFC} in appendix, which provides a PDE system characterizing the solution of the individual MFC problem. 
Based on this, we prove the following theorem, which provides necessary conditions for an extended MFCG equilibrium.
\begin{theorem}
	\label{thm:general_mfcg}
	If $\hat{v}$ is an equilibrium control for the MFCG, then, for all $(t,x)$, it satisfies
	\begin{align}
		 & \partial_v f(t, x, \hat{v}(t,x), \bar{\gamma}_t^{\hat{v}}, \bar{\gamma}_t^{\hat{v}})
		\label{eq:optimality-condition-MFCG}
		\\
		 & + D_xu^{\hat{v}}(t,x) \cdot \partial_v \drift(t,x,\hat{v}(t,x), \bar{\varphi}_t^{\hat{v}}, \bar{\varphi}_t^{\hat{v}})
		\notag
		\\
		 & + \partial_v \gamma(t,x, \hat{v}(t,x)) \hat{K}^{\gamma}_t + \partial_v \varphi(t,x, \hat{v}(t,x)) \hat{K}^{\varphi}_t
		= 0,
		\notag
	\end{align}
	where $(m^{\hat{v}},u^{\hat{v}})$ solves the following PDE system:
	\begin{equation} \label{eq:MFCG-PDE}
		\begin{cases}
			\partial_t m^{\hat{v}}(t,x) + A^* m^{\hat{v}}(t,x)
			\\
			\qquad + \operatorname{div}(\drift(t,x,\hat{v}(t,x),\bar{\varphi}_t^{\hat{v}}, \bar{\varphi}_t^{\hat{v}}) m^{\hat{v}}(t,x)) = 0, \\
			m^{\hat{v}}(0, x) = m_0(x),                                                                                                 \\
			-\partial_t u^{\hat{v}}(t,x) + Au^{\hat{v}}(t,x)
			\\
			\qquad - \drift(t,x,\hat{v}(t,x),\bar{\varphi}_t^{\hat{v}}, \bar{\varphi}_t^{\hat{v}}) \cdot D_xu^{\hat{v}}(t,x)
			\\
			\qquad -f(t,x,\hat{v}(t,x), \bar{\gamma}_t^{\hat{v}}, \bar{\gamma}_t^{\hat{v}})
			\\
			\qquad
			- \varphi(t,x, \hat{v}(t,x)) \cdot \hat{K}^{\varphi}_t
			- \gamma(t,x, \hat{v}(t,x)) \cdot \hat{K}^{\gamma}_t
			=0 ,                                                                                                                        \\
			u^{\hat{v}}(T, x) = \tcost(x, \bar{\psi}^{\hat{v}}_T, \bar{\psi}^{\hat{v}}_T) + \psi(x) \cdot \hat{K}^{\psi}_T,
		\end{cases}
	\end{equation}
	with $\hat{K}^{\gamma}_t$, $\hat{K}^{\varphi}_t$, and $\hat{K}^{\psi}_T$ given by:
	\begin{small}
		\begin{align*}
			\hat{K}^{\gamma}_t  & = \int_{\R^d} \partial_{\tilde{\gamma}} f(t, x', \hat{v}(t,x'), \bar{\gamma}_t^{\hat{v}}, \bar{\gamma}_t^{\hat{v}}) m^{\hat{v}}(t,x') dx',                                    \\
			\hat{K}^{\varphi}_t & = \int_{\R^d} D_xu^{\hat{v}}(t, x') \cdot \partial_{\tilde{\varphi}} \drift(t,x',\hat{v}(t,x'), \bar{\varphi}_t^{\hat{v}}, \bar{\varphi}_t^{\hat{v}}) m^{\hat{v}}(t, x') dx', \\
			\hat{K}^{\psi}_T    & = \int_{\R^d} \partial_{\tilde{\psi}} \tcost(x', \bar{\psi}_T^{\hat{v}}, \bar{\psi}_T^{\hat{v}}) m^{\hat{v}}(T, x') dx'.
		\end{align*}
	\end{small}
\end{theorem}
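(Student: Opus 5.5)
Since $\hat v$ is an equilibrium, it minimizes $v\mapsto J(v;\hat v)$ with the population fixed. Freezing the population control at $\hat v$ turns the externally imposed quantities $\bar\varphi^{\hat v}_t$, $\bar\gamma^{\hat v}_t$, $\bar\psi^{\hat v}_T$ into given deterministic functions of time. The representative player's problem is then an extended MFC problem: the dynamics and cost depend on the player's own state--action moments $\tilde\varphi^{v,\hat v}$, $\tilde\gamma^{v,\hat v}$, $\tilde\psi^{v,\hat v}$, together with these frozen exogenous parameters. I would apply Lemma~\ref{lem:PDE-extended-MFC} (the PDE characterization of optimality for extended MFC problems) to this frozen problem. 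It should give the first-order necessary condition at the optimizer $v=\hat v$: a KFP equation for $m^{\hat v,\hat v}$, an adjoint HJB equation for a function $u$, and a pointwise optimality condition in $v$. In these, the dependence on the own moments produces Lagrange-multiplier-type terms
\[
\int \partial_{\tilde\gamma} f\, m\,dx',\qquad \int D_xu\cdot\partial_{\tilde\varphi} b\, m\,dx',\qquad \int \partial_{\tilde\psi} g\, m(T)\,dx'.
\]

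If I had to justify the lemma's content myself, I would do so by a Gateaux-derivative computation. Perturb $v^\epsilon=\hat v+\epsilon w$ and differentiate the KFP solution to get a linearized equation for $\mu=\partial_\epsilon m^{v^\epsilon}|_{\epsilon=0}$. This equation carries source terms from $\partial_v b\,w$ and from the derivative of $\tilde\varphi$, namely $\int(\partial_v\varphi\, w\, m+\varphi\,\mu)$. Differentiating $J$ gives the analogous terms for $\gamma$ and $\psi$. Next, I would introduce the adjoint $u$, defined as the solution of the backward equation in \eqref{eq:MFCG-PDE}. The extra potentials $\varphi\cdot\hat K^\varphi_t$ and $\gamma\cdot\hat K^\gamma_t$ are chosen exactly so that, after pairing with $\mu$ and integrating by parts, every $\mu$-term cancels. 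The terminal term $\psi\cdot \hat K^\psi_T$ absorbs the derivative of $\tilde\psi_T$. What remains is
\[
\frac{d}{d\epsilon}J\Big|_{0}=\int_0^T\!\!\int \big[\partial_v f + D_xu\cdot\partial_v b + \partial_v\gamma\,\hat K^\gamma_t+\partial_v\varphi\,\hat K^\varphi_t\big]\cdot w\; m\,dx\,dt .
\]
Since this must vanish for every $w$, the bracket vanishes wherever $m>0$, which gives \eqref{eq:optimality-condition-MFCG} (for all $x$, assuming $m$ has full support).

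The final step is the consistency substitution. Because $v=\hat v$ and the population also uses $\hat v$, the individual and population systems coincide: $m^{\hat v,\hat v}$ solves the same equation as $m^{\hat v}$. By uniqueness of the KFP solution, $m^{\hat v,\hat v}=m^{\hat v}$, and hence $\tilde\varphi^{\hat v,\hat v}_t=\bar\varphi^{\hat v}_t$, $\tilde\gamma=\bar\gamma$ and $\tilde\psi=\bar\psi$. Substituting these into the lemma's system yields exactly \eqref{eq:MFCG-PDE} together with the stated formulas for $\hat K^\gamma,\hat K^\varphi,\hat K^\psi$.

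The main technical obstacle is the cancellation in the duality step. The linearized KFP has a nonlocal source, since $\mu$ enters $\tilde\varphi$, so the adjoint must carry the time-dependent constant $\hat K^\varphi_t$. Verifying that this constant is exactly what makes the pairing $\int u\,\mu$ close requires careful bookkeeping, because $\hat K^\varphi$ itself depends on $u$. I would treat this by assuming the system is well posed, as in the lemma, and only checking the algebraic identity. The second point needing care is the regularity and uniqueness needed to identify $m^{\hat v,\hat v}$ with $m^{\hat v}$, which I would take as standing assumptions.
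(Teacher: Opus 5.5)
Your proposal is correct and follows the paper's proof. You apply Lemma~\ref{lem:PDE-extended-MFC} with $v^*=\hat v$ to the frozen-population problem, then use that the individual and the population both play $\hat v$ to replace $\tilde\varphi,\tilde\gamma,\tilde\psi$ by $\bar\varphi^{\hat v},\bar\gamma^{\hat v},\bar\psi^{\hat v}$; your G\^ateaux/duality sketch of the lemma also matches the appendix argument. The caveats you flag are left implicit in the paper as well: uniqueness of the McKean--Vlasov KFP, needed to identify $m^{\hat v,\hat v}$ with $m^{\hat v}$, and positivity of $m$, needed to obtain the condition for all $x$ rather than only $m$-a.e.
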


\section{Linear-Quadratic Case}
\label{sec:LQ}

In this section, we consider the extended MFCG model in a linear-quadratic framework.
To alleviate the notation, we assume that the dynamics do not depend on mean field terms. 
Specifically, we take $\drift$ to be linear in $(x,v)$ and independent of $(\tilde{\varphi},\bar{\varphi})$, $f$ to be quadratic in $(x,v,\tilde{\gamma},\bar{\gamma})$, $\varphi$ and $\gamma$ to be linear in $(x,v)$, $\tcost$ to be quadratic in $(x,\tilde{\psi}, \bar{\psi})$, and $\psi$ to be linear in $x$.
To be specific, we consider the following model.
The dynamics are given by
\begin{equation} \label{eq:lq-dynamics}
	dX_t=(A_tX_t + B_tv_t)dt + C_tX_tdW_t,
\end{equation}
with $X_0 \sim m_0$, where $v_t=v(t,X_t)$ is a feedback control at time $t$ taking values in the action set $\mathbb{R}^k$, and $A$, $B$, and $C$ are given deterministic processes.
Let $\tilde{X}$ and $\tilde{v}$ denote the mean state and mean control within a group, and let $\bar{X}$ and $\bar{v}$ denote the corresponding population-wide means.
$\tilde{X}$ and $\bar{X}$ with respective dynamics~\eqref{eq:state-dynamics} and~\eqref{eq:lq-dynamics} remain distinct, because they aggregate at different levels and under different controls: $\tilde{X}_t$ is the mean state within the representative group, which uses the control $v$ and is therefore free to deviate, whereas $\bar{X}_t$ is the mean state across the whole population, every group of which uses $\bar{v}$. As with $\tilde{\varphi}$ and $\bar{\varphi}$, the two coincide at equilibrium, where $v = \bar{v}$.
Given the global population mean $\bar{X} = (\bar{X}_t)_{t \in [0,T]}$ and $\bar{v} = (\bar{v}_t)_{t\in [0,T]}$, a representative group planner chooses $v$ to minimize the total quadratic cost functional
\begin{multline} \label{eq:objective-functional}
	J(v; \bar{X}, \bar{v}) = \mathbb{E}\bigg[\int_0^T F(t, X_t, \tilde{X}_t, \bar{X}_t, v_t, \tilde{v}_t, \bar{v}_t)dt \\
		+ G(X_T, \tilde{X}_T, \bar{X}_T)\bigg],
\end{multline}
where the running cost $F$ and terminal cost $G$ are given by
\begin{align}
	 & F(t,x,\tilde{x}, \bar{x}, v, \tilde{v}, \bar{v})
	=  Q_tx\cdot x  +  \tilde{Q}_t\tilde{x}\cdot \tilde{x}
	+  \bar Q_t\bar{x}\cdot \bar{x}
	\\
	 & \qquad + M_t \bar{x}\cdot x  +  \tilde{M}_t \bar{x}\cdot  \tilde{x} + R_tv\cdot  v
	+\tilde{R}_t\tilde{v}\cdot  \tilde{v}
	\\
	 & \qquad +\bar{R}_t \bar{v}\cdot \bar{v}
	+ N_t \bar{v}\cdot v +  \tilde{N}_t \bar{v}\cdot \tilde{v}+ 2 S_tx\cdot v
	\\
	 & \qquad + 2 \tilde{S}_t \tilde{x}\cdot \tilde{v}  + 2\bar{S}_t \bar{x}\cdot \bar{v} + 2 q_t\cdot x + 2\tilde{q}_t\cdot \tilde{x}
	\\
	 & \qquad + 2\bar{q}_t\cdot\bar{x} + 2 r_t\cdot v + 2 \tilde{r}_t\cdot \tilde{v} + 2\bar{r}_t\cdot \bar{v},
	\\
	 & G(x, \tilde{x}, \bar{x}) =  Hx\cdot x + \tilde{H} \tilde{x}\cdot \tilde{x}+ \bar{H} \bar{x}\cdot\bar{x},
	\label{eq:LQ-costs}
\end{align}
where $Q$, $\tilde{Q}$, $\bar{Q}$, $M$, $\tilde{M}$, $R$, $\tilde{R}$, $\bar{R}$, $N$, $\tilde{N}$, $S$, $\tilde{S}$, $\bar{S}$, $q$, $\tilde{q}$, $\bar{q}$, $r$, $\tilde{r}$, $\bar{r}$ are deterministic, matrix-valued processes, and $H$, $\tilde{H}$, $\bar{H}$ are constant matrices. 
We denote by $\mathcal{S}^n$ the set of all $n\times n$ symmetric matrices with real entries.
For $U\in \mathcal{S}^n$, $U\geq 0$ means $U$ is positive semi-definite. 
The linear-quadratic framework studied in this paper differs from the control-average mean-field stochastic large-population system introduced in \cite{li2022dynamic}. A foundational two-Riccati decomposition for LQ mean-field control is developed in~\cite{yong2013linear}, while related LQ MFGs of controls are studied in~\cite{li2023linear}.
We derive the associated ODE system inspired by the results of \cite{graber2016linear}. 
Note that the author of \cite{graber2016linear} studies MFC and MFG problems separately, whereas we consider the MFCG formulation, which encompasses both MFC and MFG as special cases.

\begin{assumption}\label{assum:coeff}
	The coefficient matrices satisfy
	\begin{enumerate}
		\item $A, C\in L^{\infty}([0, T]; \mathbb{R}^{d \times d})$,
		      $B\in L^{\infty}([0, T]; \mathbb{R}^{d \times k})$;
		\item $Q, \tilde{Q},\bar{Q}, M, \tilde{M} \in L^{\infty}([0, T]; \mathcal{S}^d)$, $R,\tilde{R}, \bar{R}, N,\tilde{N} \in L^{\infty}([0, T]; \mathcal{S}^k)$, $H,\tilde{H}, \bar{H} \in \mathcal{S}^d$;
		\item $H \ge 0$, $H + \tilde{H} \ge 0$, and for some $\delta_1 \ge 0, \delta_2 > 0$, $Q, Q + \tilde{Q} \ge \delta_1 I$ and $R, R + \tilde{R} \ge \delta_2 I$;
		\item $S, \tilde{S},\bar{S} \in L^{\infty}([0, T]; \mathbb{R}^{k \times d})$; $q, \tilde{q},\bar{q} \in L^{\infty}([0, T]; \mathbb{R}^d)$; $r,\tilde{r}, \bar{r} \in L^{\infty}([0, T]; \mathbb{R}^k)$;
		\item $\|S\|_{\infty}^2, \|S + \tilde{S}\|_{\infty}^2 < \delta_1 \delta_2$ if $\delta_1 > 0$, and $S = \tilde{S} = 0$ otherwise.
	\end{enumerate}
\end{assumption}

In Assumption~\ref{assum:coeff}, boundedness of the coefficients guarantees well-posedness of the controlled state equation, while the positive definiteness conditions on the cost coefficients ensure strict convexity of the optimization problem and uniqueness of the optimal control.

We then derive the system of ordinary differential equations (ODEs) associated with the linear-quadratic MFCG equilibrium.
We first introduce two ODEs:

\begin{equation}
	\begin{cases}
		\dot{P}_t + A_t^\top P_t + P_t A_t + C_t^\top P_t C_t  + Q_t       \\
		\quad \quad  - \Lambda_0^\top(t)\Sigma_0^{-1}(t) \Lambda_0(t) = 0, \\
		P_T = H,
	\end{cases}
	\label{eq:P}
\end{equation}
and
\begin{equation}
	\begin{cases}
		\dot{\Pi}_t + A_t^\top\Pi_t + \Pi_tA_t + C_t^\top P_t C_t
		+ Q_t + \tilde{Q}_t                                   \\
		\quad \quad
		- \Lambda_1^\top(t)\Sigma_1^{-1}(t) \Lambda_1(t) = 0, \\
		\Pi_T = H + \tilde{H},
	\end{cases}
	\label{eq:Pi}
\end{equation}
where
$\Lambda_0(t)=B^\top_t P_t+ S_t,             
	 \Lambda_1(t)=B^\top_t \Pi_t+S_t+\tilde{S}_t, 
	 \Sigma_0(t)=R_t,                             
	 \Sigma_1(t)=R_t+ \tilde{R}_t.
$

Now we are ready to give a sufficient condition for the MFCG equilibrium.

\begin{theorem}
	\label{thm:lq_mfcg}
	Under Assumption~\ref{assum:coeff}, there exist unique solutions $P, \Pi$ to equations \eqref{eq:P} and \eqref{eq:Pi}.
	Moreover, if there exists a solution 
    to the following ODE system
	\begin{align}
		\begin{cases}
			\dot {z}_t=\big(A_t-B_t\Sigma_1^{-1}(t)\Lambda_1(t)\big)z_t- B_t \Sigma_1^{-1}(t)            \\
			\quad \quad \qquad \big(\frac 1 2(N_t+\tilde{N}_t)a_t+r_t+\tilde{r}_t + B_t^\top\phi_t\big), \\
			z_0 = \int_{\RR^d}x m_0(x)dx,                                                                \\
			a_t =-\Sigma_1^{-1}(t)\big(\Lambda_1(t)z_t+\frac 1 2(N_t+\tilde{N}_t)a_t                     \\
			\quad \quad +r_t+\tilde{r}_t+B^\top_t\phi_t\big),\\
             \dot\phi_t=
-\bigl(A_t^\top-\Lambda_1^\top\Sigma_1^{-1}B_t^\top\bigr)\phi_t-\frac{1}{2}(M^\top_t+\tilde{M}^\top_t)z_t\\
\quad \quad +\Lambda_1^\top(t)\Sigma_1^{-1}(t)\big(r_t+\tilde r_t+\frac{1}{2}(N^\top_t+\tilde{N}^\top_t)a_t\big)-(q_t+\tilde q_t),\\
\phi_T=0.
\end{cases}		\label{eq:coupled_z_a}
	\end{align}    
Then $(\hat{v}_t)_{t\in[0,T]}$ is a MFCG equilibrium control, where $\hat{v}$ is given by	
		$\hat{v}_t=-\Sigma_0^{-1}(t)\Lambda_0(t)(X_t-z_t)+a_t.$
\end{theorem}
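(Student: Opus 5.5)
Existence and uniqueness of $P$ and $\Pi$ come first. Both \eqref{eq:P} and \eqref{eq:Pi} are standard backward Riccati equations of stochastic LQ type, since the diffusion term $C_tX_tdW_t$ is multiplicative and contributes $C^\top P C$. For \eqref{eq:P}, Assumption~\ref{assum:coeff} gives $R\ge\delta_2 I$, $H\ge0$, and the joint convexity $\begin{pmatrix} Q & S^\top\\ S & R\end{pmatrix}\ge0$, which follows from $\|S\|^2<\delta_1\delta_2$ (or from $S=0$). The classical result (Yong--Zhou) then gives a unique solution $P\ge0$ on $[0,T]$. For \eqref{eq:Pi}, $P$ is already known, so $C^\top P C\ge0$ is a given nonnegative source term. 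Equation \eqref{eq:Pi} is then a deterministic Riccati equation with data $A$, $B$, $Q+\tilde Q+C^\top PC$, $S+\tilde S$, $R+\tilde R$ and terminal value $H+\tilde H\ge0$, and the same convexity conditions hold. Uniqueness follows from local Lipschitz continuity, and global existence follows from the a priori bounds $0\le\Pi_t\le$ the cost of the zero control, as in \cite{graber2016linear,yong2013linear}.

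For the equilibrium, I freeze the population means. Let $(z,a,\phi)$ solve \eqref{eq:coupled_z_a} and set $\bar X=z$, $\bar v=a$. The group planner then faces an extended LQ MFC problem with deterministic exogenous inputs $\bar X,\bar v$, which enter only through linear terms and constants; for example, $M\bar x\cdot x$ becomes a linear term in $x$. I would solve this problem by the two-Riccati decomposition of \cite{yong2013linear}. Write $X=(X-\tilde X)+\tilde X$ and $v=(v-\tilde v)+\tilde v$ with $\tilde X=\EE X$ and $\tilde v=\EE v$. The cost splits into a fluctuation part, with weights $Q,R,S,H$ and diffusion $C$ acting on the full $X$, and a mean part, with weights $Q+\tilde Q$, $R+\tilde R$, $S+\tilde S$, $H+\tilde H$ plus linear terms $\tfrac12(M+\tilde M)\bar X+q+\tilde q$ in $\tilde x$ and $\tfrac12(N+\tilde N)\bar v+r+\tilde r$ in $\tilde v$. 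The term $C\tilde X$ in the diffusion is why $C^\top PC$ rather than $C^\top\Pi C$ appears in \eqref{eq:Pi}. I would then apply Itô's formula to $P_t(X_t-\tilde X_t)\cdot(X_t-\tilde X_t)+\Pi_t\tilde X_t\cdot\tilde X_t+2\phi_t\cdot\tilde X_t$ and complete the squares, using \eqref{eq:P} and \eqref{eq:Pi}. This should yield
\[
J(v)=\text{const}+\EE\!\int_0^T\!\Sigma_0\big(v-\tilde v+\Sigma_0^{-1}\Lambda_0(X-\tilde X)\big)\cdot(\cdots)\,dt+\int_0^T\!\Sigma_1(\tilde v-\hat a_t)\cdot(\tilde v-\hat a_t)\,dt,
\]
where $\hat a_t=-\Sigma_1^{-1}(\Lambda_1\tilde X_t+\tfrac12(N+\tilde N)\bar v_t+r+\tilde r+B^\top\phi_t)$. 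The $\phi$-equation is chosen so that the linear terms in $\tilde X$ cancel. Since $\Sigma_0,\Sigma_1>0$, the minimizer is $v=\tilde v-\Sigma_0^{-1}\Lambda_0(X-\tilde X)$ with $\tilde v=\hat a$, and the mean dynamics become $\dot{\tilde X}=(A-B\Sigma_1^{-1}\Lambda_1)\tilde X-B\Sigma_1^{-1}(\cdots)$.

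Finally, consistency. With $\bar X=z$ and $\bar v=a$, the optimal group mean $\tilde X$ solves the same linear ODE as $z$ with the same initial value, so $\tilde X=z$ by uniqueness. Then $\hat a_t=a_t$ by the algebraic relation in \eqref{eq:coupled_z_a}, so the optimal control is exactly $\hat v_t=-\Sigma_0^{-1}\Lambda_0(X_t-z_t)+a_t$. Because every group uses it, the population means it generates are again $(z,a)$, which is Definition~\ref{def:equilibrium}. As a cross-check, one could verify that this agrees with the necessary conditions of Theorem~\ref{thm:general_mfcg} with $u$ quadratic.

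I expect the main obstacle to be the Itô/completing-the-square computation for the mean part. There I must check carefully that the $\phi$-equation in \eqref{eq:coupled_z_a}, including its $\Lambda_1^\top\Sigma_1^{-1}(\cdots)$ terms and the factors $\tfrac12(M+\tilde M)$ and $\tfrac12(N+\tilde N)$ coming from the cross terms with $\bar x,\bar v$, exactly cancels all linear-in-$\tilde X$ terms. Transpose and symmetry conventions, for instance $M^\top$ versus $M$, must also match.
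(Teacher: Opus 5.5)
Your argument is correct, and its skeleton matches the paper's. You freeze $(\bar X,\bar v)$, absorb them into the linear coefficients, solve the resulting group-level LQ MFC problem, and then impose $\tilde X=\bar X$, $\tilde v=\bar v$. Your consistency step is the paper's Step 2, with the uniqueness of the linear ODEs for $\tilde X$ and $\phi$ made explicit.

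The genuine difference is in Step 1. The paper does not derive the planner's optimal control at all. It cites \cite[Theorem 2.6]{graber2016linear} for existence and uniqueness of $P,\Pi$ and for the feedback form of $\hat v$. It then has to correct that theorem in a footnote: the $\phi$-terms must be kept in the trajectory equation, and $\phi$ must solve a backward ODE with $\phi_T=0$. So the $\phi$-equation in \eqref{eq:coupled_z_a} rests on a correction that is asserted rather than proved.

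You instead check the Riccati hypotheses directly, via Schur-complement convexity from $\|S\|^2<\delta_1\delta_2$ and $C^\top PC\ge 0$ as a known source term in \eqref{eq:Pi}. You then prove optimality by applying It\^o's formula to $P(X-\tilde X)\cdot(X-\tilde X)+\Pi\tilde X\cdot\tilde X+2\phi\cdot\tilde X$ and completing squares. This costs a longer computation and a routine integrability check: the stochastic integrals must be true martingales for square-integrable controls.

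In exchange, your route is self-contained and gives global optimality $J(v)\ge J(\hat v)$ directly. It also derives, rather than imports, exactly the delicate points. The $\phi$-equation emerges as the condition that the linear-in-$\tilde X$ terms cancel, and carrying this out does reproduce the paper's equation, using the symmetry of $M,\tilde M,N,\tilde N$. The $C^\top PC$ term in \eqref{eq:Pi} is traced to the $C\tilde X\,dW$ part of the noise.
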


\begin{figure*}[t]
	\centering
	\begin{subfigure}[t]{0.24\textwidth}
		\centering
		\includegraphics[width=\textwidth]{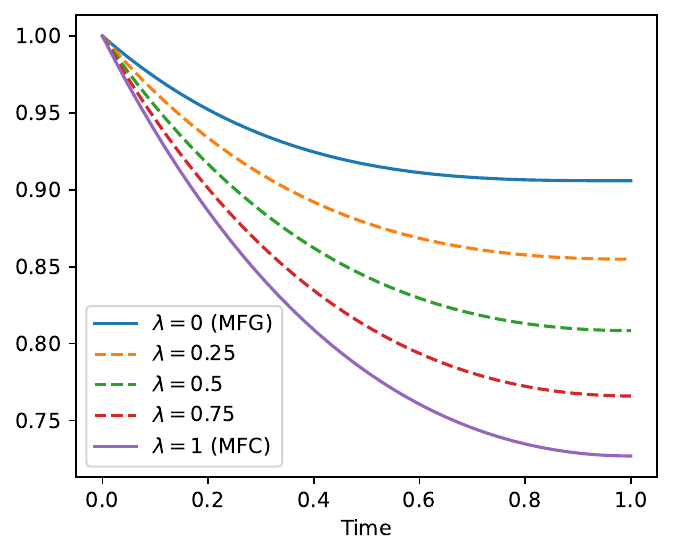}
	\end{subfigure}
	\hfill
	\begin{subfigure}[t]{0.24\textwidth}
		\centering
		\includegraphics[width=\textwidth]{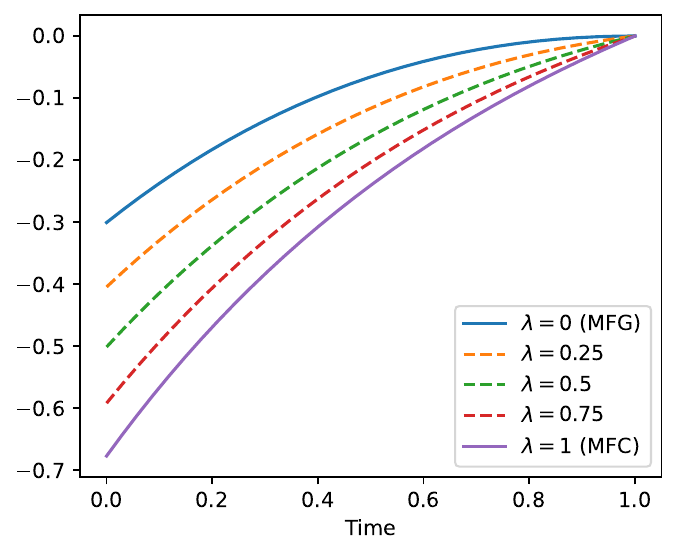}
	\end{subfigure}
	\hfill
	\begin{subfigure}[t]{0.24\textwidth}
		\centering
		\includegraphics[width=\textwidth]{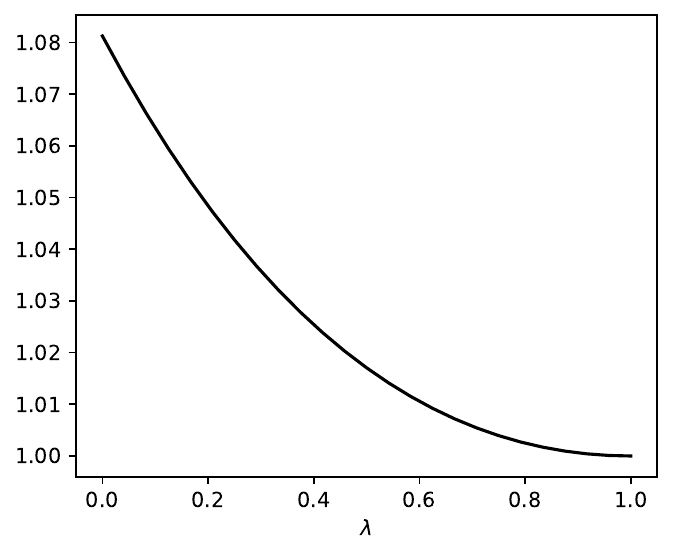}
	\end{subfigure}
	\caption{Experiment 1: mean state, mean control, and Price of Anarchy (PoA) under the modified benchmark cost~\eqref{eq:objective-ex1}. Left: Mean state $z_t$. Middle: Mean control $a_t$. Right: PoA $J_{\mathrm{MFCG}}/J_{\mathrm{MFC}}$.}
	\label{fig:experiment1}
\end{figure*}

\section{Model and Numerical Experiments}

We construct an MFCG parameterized by $(\lambda_1,\lambda_2)\in[0,1]^2$, allowing us to recover MFG and MFC as special cases.
We therefore consider a cost functional built around the following term:
  \begin{equation}\label{eq:lambda-interpolation}
  \resizebox{0.95\linewidth}{!}{$
  \begin{aligned}
  \frac{1}{2}\Bigl(X_t-(1-\lambda_1)\bar X_t-\lambda_1\widetilde X_t\Bigr)^2
  &+\frac{1}{2}\Bigl((1-\lambda_1)\bar X_t+\lambda_1\widetilde X_t\Bigr)^2\\
  +\frac{1}{2}\Bigl(\alpha_t-(1-\lambda_2)\bar\alpha_t-\lambda_2\widetilde\alpha_t\Bigr)^2
  &+\frac{1}{2}\Bigl((1-\lambda_2)\bar\alpha_t+\lambda_2\widetilde\alpha_t\Bigr)^2.
  \end{aligned}$}
  \end{equation}

Since $\widetilde X_t=\mathbb E[X_t]$ and
$\widetilde\alpha_t=\mathbb E[\alpha_t]$, we have
$\mathbb E[X_t\widetilde X_t]=\widetilde X_t^2$ and
$\mathbb E[\alpha_t\widetilde\alpha_t]=\widetilde\alpha_t^2$.
Hence, the individual--group cross terms are absorbed into
$\widetilde Q_t$ and $\widetilde R_t$, respectively, without invoking
the equilibrium consistency condition. The resulting two-parameter
model is therefore a specialization of the general PDE framework of
Sections~\ref{sec:extended-mfcg-model} and~\ref{sec:equilibrium}.

The corner values $(\lambda_1,\lambda_2)=(0,0)$ and $(1,1)$ recover,
respectively, the pure MFG regime, in which the global state and
control means are treated as exogenous benchmarks, and the pure MFC
regime, in which the corresponding group means are internalized.
The asymmetric corners $(1,0)$ and $(0,1)$ isolate internalization in
the state and control channels, respectively; in the numerical
experiment below, they produce stronger and weaker equilibrium
adjustment responses. Interior values combine the two effects, while
fixing one parameter and varying the other isolates the contribution
of the corresponding channel. Unlike the interpolation
in~\cite{carmona2023nash}, which is performed directly between cost
functionals, ours acts within the state and control benchmarks, as
in~\cite{dayanikli2025cooperation}. The code is available at \href{https://github.com/andrewjshi/2026-IEEECDC-LQMFCG}{https://github.com/andrewjshi/2026-IEEECDC-LQMFCG}.

\subsection{Experiment 1: A Verification Example}

Consider the dynamics \eqref{eq:lq-dynamics} with $A_t = 0, B_t = C_t = 1$:
\begin{equation} \label{eq:lq-dynamics-ex1}
	dX_t=\alpha_tdt + X_tdW_t.
\end{equation}

We use a running cost that retains the interpolation between MFG and MFC and provides a simple verification test.
Define the interpolated state benchmark
$
	\mu_t^\lambda=(1-\lambda)\bar X_t+\lambda\widetilde X_t
	\label{eq:interpolated-mean-ex1}
$
and let
\begin{equation}
	J(\alpha;\bar X,\bar\alpha)
	=
	\mathbb E\!\int_0^T
	\left[
		\alpha_t^2
		+\frac12\bigl(X_t-\mu_t^\lambda\bigr)^2
		+\frac12\bigl(\mu_t^\lambda\bigr)^2
	\right]dt.
	\label{eq:objective-ex1}
\end{equation}

The dynamics remain those in~\eqref{eq:lq-dynamics-ex1}.
Expanding~\eqref{eq:objective-ex1} in the general LQ cost gives
$
	Q_t=\frac12,
	\bar Q_t=(1-\lambda)^2,
	\widetilde Q_t=\lambda^2-\lambda,
$, 
$
	M_t=-(1-\lambda),
	\widetilde M_t=2\lambda(1-\lambda),
$
The remaining coefficients are chosen as follows.  We set $R_t=1$, while
$\widetilde R_t=\bar R_t=N_t=\widetilde N_t=S_t=\widetilde S_t=\bar S_t
=r_t=\widetilde r_t=\bar r_t=0$; all remaining state coefficients are zero.
We also take $H=\widetilde H=\bar H=0$.
Consequently, $\Lambda_0(t)=P_t$, $\Lambda_1(t)=\Pi_t$, and
$\Sigma_0(t)=\Sigma_1(t)=1$.
The Riccati equations \eqref{eq:P} and \eqref{eq:Pi} become
\begin{equation}
	\begin{cases}
		\dot P_t+P_t+\frac12-P_t^2=0, & P_T=0,\\
		\dot\Pi_t+P_t+\frac12-\lambda(1-\lambda)-\Pi_t^2=0,
		& \Pi_T=0,
	\end{cases}
	\label{eq:riccati-ex1}
\end{equation}
and  \eqref{eq:coupled_z_a} becomes
\begin{equation}
	\begin{cases}
		\dot z_t=-\Pi_tz_t-\phi_t, & z_0=X_0,\\
		\dot\phi_t=\Pi_t\phi_t
		-\dfrac12(1-\lambda)(2\lambda-1)z_t,
		& \phi_T=0,\\
		a_t=-\Pi_tz_t-\phi_t.
	\end{cases}
	\label{eq:coupled-ex1}
\end{equation}

We set $T = 1$ and $X_0 = 1$ and solve the system \eqref{eq:coupled-ex1} for five representative values $\lambda \in \{0, 0.25, 0.5, 0.75, 1\}$.
The mean state $z_t$ (Fig.~\ref{fig:experiment1}(a)) starts at $X_0 = 1$ for all $\lambda$.
Since $H = \tilde{H} = 0$, there is no terminal cost penalizing the final state, so agents have no incentive to drive $z_T$ toward any particular value.
The trajectories are therefore governed entirely by the running cost and decline throughout the horizon.
At $\lambda=0$ (pure MFG), the group planner treats the population mean $\bar X_t$ as a fixed external benchmark, and the mean state undergoes the shallowest decline.
As $\lambda$ increases toward $1$ (pure MFC), the group planner increasingly internalizes the benchmark penalty and applies a stronger control to reduce the mean state.
Consequently, the trajectories separate monotonically, with the $\lambda=1$ curve attaining the lowest mean state.

The mean control $a_t$ (Fig.~\ref{fig:experiment1}(b)) exhibits the same qualitative ordering.
All controls begin negative and rise toward zero at $T$, consistently with the absence of a terminal cost.
The MFG regime applies the least negative initial control, while the control becomes progressively more negative as $\lambda$ approaches the MFC regime.
This ordering reflects the group planner's increasing internalization of the cost associated with the state benchmark.

Finally, the Price of Anarchy (PoA) $J_{\mathrm{MFCG}}/J_{\mathrm{MFC}}$ (Fig.~\ref{fig:experiment1}(c)) decreases monotonically from approximately $1.081$ at $\lambda=0$ to $1$ at $\lambda=1$, and the curve is convex in $\lambda$.
The ratio equals $1$ at $\lambda=1$ by construction, since $\lambda=1$ corresponds to the cooperative optimum.
Thus the pure MFG regime incurs an aggregate efficiency loss of approximately eight percent relative to the MFC optimum.

\subsection{Experiment 2: A ``Two-Pillar'' Corporate Benchmarking Model}

We consider a stylized corporate benchmarking model with two interaction channels.
The state $X_t\in\mathbb R$ represents a firm's performance gap relative to a fixed target, while the control $\alpha_t\in\mathbb R$
represents its adjustment rate. Starting from a positive gap, a negative
control represents corrective action that reduces the gap. Firms are influenced
both by the performance gaps of their peers and by prevailing adjustment practices. 
This model has three distinct levels of interaction:
\textbf{Individual:} A representative firm's performance gap $X_t$
          and adjustment rate $\alpha_t$.
\textbf{Group (firm mean):} The within-firm means
          $\widetilde X_t=\mathbb E[X_t]$ and
          $\widetilde\alpha_t=\mathbb E[\alpha_t]$.
\textbf{Global (economy mean):} The economy-wide means
          $\bar X_t$ and $\bar\alpha_t$.

The representative firm's performance gap evolves as
    $$
    dX_t=\alpha_t\,dt+X_t\,dW_t,\qquad X_0=x_0.
    $$
The control changes the gap directly, while the multiplicative noise represents
uncertainty proportional to the current magnitude of the imbalance.

The model has two independently parameterized benchmarking pillars. The
\emph{state benchmark}
    $\mu_t^{\lambda_1}=(1-\lambda_1)\bar X_t
    +\lambda_1\widetilde X_t$
interpolates between an external, economy-wide performance benchmark and an
internal, group-level benchmark. Similarly, the \emph{adjustment benchmark}
    $\nu_t^{\lambda_2}=(1-\lambda_2)\bar\alpha_t
    +\lambda_2\widetilde\alpha_t$
interpolates between economy-wide adjustment practices and the group's internal
adjustment norm. The corner $(\lambda_1,\lambda_2)=(0,0)$ is the pure MFG regime,
whereas $(\lambda_1,\lambda_2)=(1,1)$ is the pure MFC regime. The two asymmetric
corners isolate the effects of internalizing only one benchmarking channel.

\begin{figure*}[t]
	\centering
	\begin{subfigure}[t]{0.24\textwidth}
		\centering
		\includegraphics[width=\textwidth]{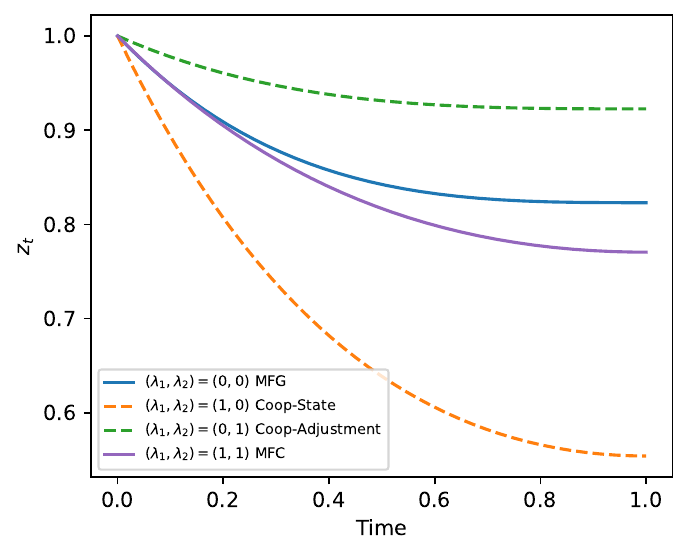}
	\end{subfigure}
	\hfill
	\begin{subfigure}[t]{0.24\textwidth}
		\centering
		\includegraphics[width=\textwidth]{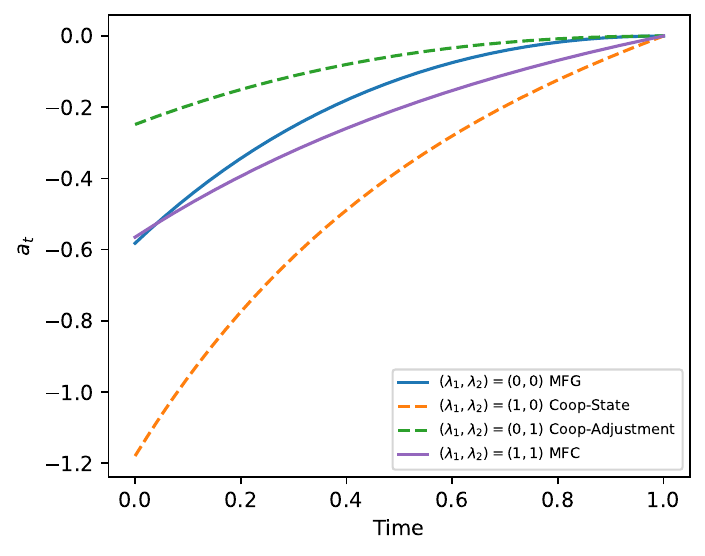}
	\end{subfigure}
	\hfill
	\begin{subfigure}[t]{0.24\textwidth}
		\centering
		\includegraphics[width=\textwidth]{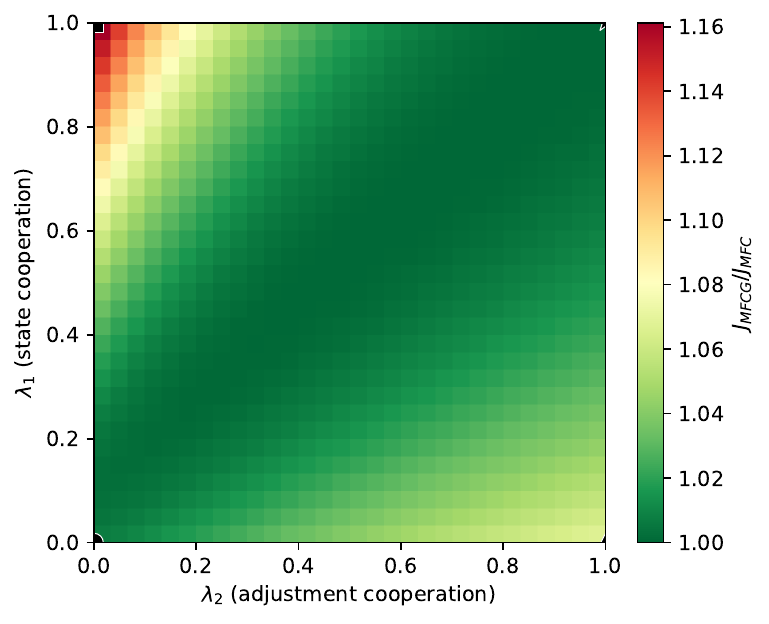}
	\end{subfigure}
	\caption{Experiment 2 for the four corner regimes and over
		the $(\lambda_1,\lambda_2)$ parameter square. Parameters: $c_\alpha=2$,
		$K_\alpha=3$, $K_x=2$, $T=1$, and $X_0=1$. Left: Mean performance gap $z_t$. Middle: Mean adjustment rate $a_t$. Right: PoA $J_{\mathrm{MFCG}}/J_{\mathrm{MFC}}$.}
	\label{fig:experiment2}
\end{figure*}

For the two-parameter experiment, 
we retain the benchmarks $\mu_t^{\lambda_1}$ and $\nu_t^{\lambda_2}$ %
and augment both deviation penalties by their corresponding benchmark penalties:
\begin{equation}\label{eq:running_cost2}
	\begin{split}
	f={}&\frac{c_\alpha}{2}\alpha^2
	+\frac{K_\alpha}{2}\left[(\alpha-\nu_t^{\lambda_2})^2
	+(\nu_t^{\lambda_2})^2\right]\\
	&+\frac{K_x}{2}\left[(x-\mu_t^{\lambda_1})^2
	+(\mu_t^{\lambda_1})^2\right].
	\end{split}
\end{equation}
The term $c_\alpha\alpha^2/2$ is the direct implementation cost of adjustment.
The deviation terms penalize departures from prevailing adjustment and state
benchmarks, while the squared-benchmark terms penalize an aggressive collective
adjustment norm and a large collective performance gap. Here $c_\alpha>0$ is the
quadratic adjustment-cost coefficient, while $K_\alpha>0$ and $K_x>0$ weight the
adjustment- and state-benchmark penalties.
Moreover, along any symmetric equilibrium, every parameter pair
is evaluated using the same welfare criterion,
\begin{equation}\label{eq:welfare-ex2}
	J=\int_0^T\left[\frac{c_\alpha+K_\alpha}{2}\,\mathbb E[\alpha_t^2]
	+\frac{K_x}{2}\,\mathbb E[X_t^2]\right]dt.
\end{equation}

For \eqref{eq:running_cost2}, the nonzero state coefficients are
$Q_t=\tfrac{K_x}{2},\quad
\widetilde Q_t=K_x(\lambda_1^2-\lambda_1),\bar Q_t=K_x(1-\lambda_1)^2,
M_t=-K_x(1-\lambda_1),
\widetilde M_t=2K_x\lambda_1(1-\lambda_1),$ 
and the nonzero control coefficients are
$R_t=\tfrac{c_\alpha+K_\alpha}{2},\quad
\widetilde R_t=K_\alpha(\lambda_2^2-\lambda_2),
\bar R_t=K_\alpha(1-\lambda_2)^2,
N_t=-K_\alpha(1-\lambda_2),\quad
\widetilde N_t=2K_\alpha\lambda_2(1-\lambda_2).$
Let $\Delta_{\lambda_2}=c_\alpha+K_\alpha(1-2\lambda_2+2\lambda_2^2)$.
The Riccati equations become
$\dot P_t+P_t+\frac{K_x}{2}-\frac{2P_t^2}{c_\alpha+K_\alpha}=0,
P_T=0,$ and 
$\dot\Pi_t+P_t+K_x\left(\frac12-\lambda_1(1-\lambda_1)\right)
	-\frac{2\Pi_t^2}{\Delta_{\lambda_2}}=0,
\Pi_T=0.
$
The consistency system is
\begin{equation}\label{eq:coupled-ex2}
	\begin{cases}
	\dot z_t=a_t,\quad z_0=x_0,\\[3pt]
	\begin{aligned}
	\dot\phi_t={}&\frac{2\Pi_t}{\Delta_{\lambda_2}}\phi_t
	+\frac{K_\alpha(1-\lambda_2)(2\lambda_2-1)\Pi_t}
	{\Delta_{\lambda_2}}a_t\\
	&-\frac{K_x}{2}(1-\lambda_1)(2\lambda_1-1)z_t,
	\qquad \phi_T=0,
	\end{aligned}\\[5pt]
	\displaystyle
	a_t=-\frac{2}{c_\alpha+K_\alpha\lambda_2}(\Pi_tz_t+\phi_t).
	\end{cases}
\end{equation}
Letting $V_t=\operatorname{Var}(X_t)$, the centered feedback and variance satisfy
	$\alpha_t-a_t=-\frac{2P_t}{c_\alpha+K_\alpha}(X_t-z_t),$
$\dot V_t=\left(1-\frac{4P_t}{c_\alpha+K_\alpha}\right)V_t+z_t^2,
	V_0=0.$
The criterion \eqref{eq:welfare-ex2} can then be evaluated as
\begin{equation}\label{eq:cost-ex2}
	\begin{split}
	J=\int_0^T\biggl[&\frac{c_\alpha+K_\alpha}{2}a_t^2
	+\frac{c_\alpha+K_\alpha}{2}
	\left(\frac{2P_t}{c_\alpha+K_\alpha}\right)^2V_t\\
	&+\frac{K_x}{2}(z_t^2+V_t)\biggr]dt.
	\end{split}
\end{equation}

We take $T=X_0=1$, $c_\alpha=2$, $K_\alpha=3$, and $K_x=2$.
Figure~\ref{fig:experiment2} compares the four corner regimes. The
Coop-State, Competitive-Adjustment regime $(1,0)$ produces the fastest
decline in the mean gap and the most negative adjustment, whereas
$(0,1)$ produces the weakest response; the MFG and MFC trajectories lie
between these asymmetric cases. The PoA is approximately
$1.007$ at the MFG corner, reaches approximately $1.16$ near $(1,0)$,
and equals $1$ at the MFC corner. Thus, partial cooperation need not
improve welfare monotonically: internalizing the state objective without
the adjustment channel can induce excessive correction.

\section{Concluding Remarks}

In this paper, we formulate a framework for extended MFCG problems that provides a unified model of cooperative groups interacting in a non-cooperative environment through the joint distribution of states and actions.
To characterize the equilibrium, we derive the associated HJB-KFP system. Focusing on an LQ model, we provide optimality conditions through ODEs. 
We complement this study with numerical experiments. 
We note that our current LQ setting is not fully general; therefore, future work will study more general LQ models from both theoretical and numerical perspectives.
We will also study other models beyond the LQ setting. 
In particular, we intend to numerically solve the PDE system \eqref{eq:MFCG-PDE}. 
Furthermore, the incorporation of common noise and hierarchical network structures remains a significant area for exploration.
These directions are reserved for future research.

{\footnotesize
\bibliographystyle{abbrv}
\bibliography{references}
}

\appendix

\section{Auxiliary result for the PDE system}

\begin{lemma}[Necessary condition of optimality for the individual problem]
	\label{lem:PDE-extended-MFC}
	Let $\bar{v}$ be the control used by the population.
	Given $\bar{\varphi}^{\bar{v}}$, $\bar{\gamma}^{\bar{v}}$, and $\bar{\psi}^{\bar{v}}_T$, if $v^*$ is an optimal feedback control minimizing \eqref{eq:objective-functional-1} subject to \eqref{eq:KFP-eqn}, then, for all $(t,x)$, it satisfies the following necessary optimality condition:
	\begin{align}
		 & \partial_v f(t, x, v^*(t,x), \tilde{\gamma}^{v^*,\bar{v}}_t, \bar{\gamma}_t^{\bar{v}})
		\label{eq:optimality-condition-MFC}
		\\
		 & + D_xu^{v^*,\bar{v}}(t,x) \cdot \partial_v \drift(t,x,v^*(t,x), \tilde{\varphi}^{v^*,\bar{v}}_t, \bar{\varphi}_t^{\bar{v}})
		\notag
		\\
		 & + \partial_v \gamma(t,x, v^*(t,x))\cdot K^\gamma_t + \partial_v \varphi(t,x, v^*(t,x))\cdot K^\varphi_t
		= 0,
		\notag
	\end{align}
	where $(m^{v^*,\bar{v}}, u^{v^*,\bar{v}})$ solves the following PDE system:
	\begin{equation}
		\label{eq:MFC-PDE}
		\begin{cases}
			\partial_t m^{v^*,\bar{v}}(t,x) + A^* m^{v^*,\bar{v}}(t,x)
			\\
			\qquad + \operatorname{div}(\drift(t,x,v^*(t,x),\tilde{\varphi}^{v^*,\bar{v}}_t, \bar{\varphi}_t^{\bar{v}}) m^{v^*,\bar{v}}(t,x)) = 0,
			\\ m^{v^*,\bar{v}}(0, x) = m_0(x),
			\\
			-\partial_t u^{v^*,\bar{v}}(t,x) + Au^{v^*,\bar{v}}(t,x)
			\\
			\qquad - \drift(t,x,v^*(t,x),\tilde{\varphi}^{v^*,\bar{v}}_t, \bar{\varphi}_t^{\bar{v}}) \cdot D_xu^{v^*,\bar{v}}(t,x)
			\\
			\qquad - f(t,x,v^*(t,x), \tilde{\gamma}^{v^*,\bar{v}}_t, \bar{\gamma}_t^{\bar{v}})
			\\
			\qquad - \varphi(t,x, v^*(t,x)) \cdot K^\varphi_t
			- \gamma(t,x, v^*(t,x)) \cdot K^\gamma_t
			= 0,
			\\
			u^{v^*,\bar{v}}(T, x) = \tcost(x, \tilde{\psi}_T^{v^*,\bar{v}}, \bar{\psi}_T^{\bar{v}}) + \psi(x) \cdot K^\psi_T,
		\end{cases}
	\end{equation}
	with $K^\gamma_t, K^\varphi_t$, and $K^\psi_T$ given by:
	\begin{align*}
		K^\gamma_t
		 & = \int_{\R^d} \partial_{\tilde{\gamma}} f(t, x', v^*(t,x'), \tilde{\gamma}^{v^*,\bar{v}}_t, \bar{\gamma}_t^{\bar{v}}) m^{v^*,\bar{v}}(t,x') dx', \\
		K^\varphi_t
		 & = \int_{\R^d} D_xu^{v^*,\bar{v}}(t, x') \cdot
		\\
		 & \qquad\qquad \partial_{\tilde{\varphi}} \drift(t,x',v^*(t,x'), \tilde{\varphi}^{v^*,\bar{v}}_t, \bar{\varphi}_t^{\bar{v}}) m^{v^*,\bar{v}}(t, x') dx',
		\\
		K^\psi_T
		 & = \int_{\R^d} \partial_{\tilde{\psi}} \tcost(x', \tilde{\psi}_T^{v^*,\bar{v}}, \bar{\psi}_T^{\bar{v}}) m^{v^*,\bar{v}}(T, x') dx'.
	\end{align*}
\end{lemma}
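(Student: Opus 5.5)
The plan is to derive the necessary condition by a first-order variation (Gateaux derivative) of the cost functional \eqref{eq:objective-functional-1} along perturbations of the feedback control. The population quantities $\bar{\varphi}^{\bar v}$, $\bar{\gamma}^{\bar v}$, $\bar{\psi}^{\bar v}_T$ are held fixed throughout. Concretely, fix the optimal $v^*$ and an arbitrary bounded smooth direction $w:[0,T]\times\R^d\to\R^k$, and set $v^\epsilon = v^*+\epsilon w$. Optimality gives $\frac{d}{d\epsilon}J(v^\epsilon;\bar v)|_{\epsilon=0}=0$ for every $w$.

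First, linearize the state equation. Write $m^\epsilon = m^{v^\epsilon,\bar v}$ and $\mu = \partial_\epsilon m^\epsilon|_{\epsilon=0}$. Differentiating \eqref{eq:KFP-eqn} in $\epsilon$ shows that $\mu$ solves a linear KFP equation with $\mu(0)=0$:
\[
\partial_t \mu + A^*\mu + \operatorname{div}(\drift^*\mu) + \operatorname{div}\big((\partial_v \drift^*\, w + \partial_{\tilde\varphi}\drift^*\, \delta\tilde\varphi_t) m^*\big)=0.
\]
Here $\drift^*$ is the drift evaluated along $(v^*,\tilde{\varphi}^{v^*,\bar v},\bar{\varphi}^{\bar v})$. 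The moment variation is
\[
\delta\tilde\varphi_t=\int (\varphi\,\mu + \partial_v\varphi\, w\, m^*)\,dx'.
\]
The variations $\delta\tilde\gamma_t$ and $\delta\tilde\psi_T$ are defined analogously.

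Next, differentiate $J$. This gives $\int\!\!\int (f^*\mu + \partial_v f^*\, w\, m^* + \partial_{\tilde\gamma} f^*\cdot\delta\tilde\gamma_t\, m^*)$ plus the corresponding terminal terms. The term $\int\!\!\int \partial_{\tilde\gamma} f^*\cdot\delta\tilde\gamma_t\,m^*$ factors as $\int_0^T K^\gamma_t\cdot \delta\tilde\gamma_t\,dt$, because $\delta\tilde\gamma_t$ does not depend on $x$. Expanding $\delta\tilde\gamma_t$ produces $\int\!\!\int(\gamma\cdot K^\gamma_t\,\mu + \partial_v\gamma\, w\cdot K^\gamma_t\, m^*)$. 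The terminal term similarly yields $\int (\tcost^*+\psi\cdot K^\psi_T)\mu(T)$.

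Then introduce the adjoint $u = u^{v^*,\bar v}$ as the solution of the backward equation in \eqref{eq:MFC-PDE}, with its terminal condition. Pair $u$ with the $\mu$-equation and integrate by parts in $(t,x)$. The terms $\int\!\!\int (f^*+\gamma\cdot K^\gamma+\varphi\cdot K^\varphi)\mu$ and $\int u(T)\mu(T)$ are then converted into $\int\!\!\int D_xu\cdot(\partial_v\drift^*\, w + \partial_{\tilde\varphi}\drift^*\,\delta\tilde\varphi_t)m^*$. As before, the $\delta\tilde\varphi$ piece factors as $\int_0^T K^\varphi_t\cdot\delta\tilde\varphi_t\,dt$. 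Its $\mu$-part cancels exactly the $\varphi\cdot K^\varphi_t$ source term that was put into the HJB for this purpose, leaving only $\partial_v\varphi\,w\cdot K^\varphi_t\,m^*$. Collecting terms, the derivative equals
\[
\int_0^T\!\!\int [\partial_v f^* + D_xu\cdot\partial_v\drift^* + \partial_v\gamma\cdot K^\gamma_t + \partial_v\varphi\cdot K^\varphi_t]\cdot w\, m^*\,dx\,dt.
\]
This vanishes for all $w$, which gives \eqref{eq:optimality-condition-MFC} wherever $m^*>0$ (everywhere, if $\sigma$ is nondegenerate so that the density is positive).

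The main obstacle is the $\tilde\varphi$ nonlocal term. Because $\delta\tilde\varphi$ depends on $\mu$ itself, the linearized equation is a McKean--Vlasov (nonlocal) linear equation. The difficulty is to verify that choosing the HJB source $\varphi\cdot K^\varphi_t$, with $K^\varphi$ defined through $u$, makes the duality identity close consistently: this makes the backward equation nonlocal in $u$. I would handle it by treating $K^\varphi$ as defined implicitly by the coupled system, and then checking the cancellation directly in the duality computation. Regularity and differentiability (justifying $\epsilon$-differentiation of $m^\epsilon$ and the integrations by parts) would be assumed, in keeping with the formal level of the paper.
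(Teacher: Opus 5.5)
Your proposal is correct and follows essentially the same route as the paper's proof: a G\^ateaux derivative along $v^*+\epsilon w$, the linearized KFP equation for the density variation, factoring the moment variations through $K^\gamma_t$, $K^\psi_T$ and $K^\varphi_t$, and pairing with the adjoint $u$ so that the $\varphi\cdot K^\varphi_t$ source term cancels and the density variation is eliminated. Your remark that the vanishing of the bracket follows only where $m^*>0$ (hence for all $(t,x)$ under nondegenerate noise) is a point of precision the paper glosses over when it invokes the fundamental lemma of the calculus of variations.
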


Before providing the proof, a few remarks are in order.

\begin{remark}[Terminology]
	Note that, although the equation for $u$ is often referred to as the HJB equation in the MFC literature, it is \emph{not} the Bellman equation of the MFC problem, and the function $u$ is \emph{not} the value function of the player; see, e.g.,~\cite{bensoussan2017interpretation} for more details on the connection between the Bellman equation and the HJB equation.
\end{remark}

\begin{remark}[Consistency with standard MFC]
	If the individual player does not interact with the population (i.e., the functions $\drift$, $f$, and $\tcost$ are constant with respect to $\bar{\varphi}, \bar{\gamma}$, and $\bar{\psi}$, respectively), and if, furthermore, the action distribution is not involved (i.e., the functions $\varphi$, $\gamma$, and $\psi$ are constant with respect to the action), then the problem reduces to a standard MFC problem. 
    Consistent with this, the PDE system in
	Lemma~\ref{lem:PDE-extended-MFC} reduces to the standard MFC PDE system given, e.g., in \cite[eq.~(4.12)]{bensoussan2013mean}.
\end{remark}

\section{Proofs}

We present here proofs that were omitted above.

\begin{proof}[Proof of Lemma~\ref{lem:PDE-extended-MFC}]
	Let $v^*$ be an optimal control. 
    Consider the perturbation $v^*+ \theta w$ and compute the G\^ateaux derivative
	\begin{multline} \label{eq:J-variation}
		\frac{d}{d\theta}J(v^* + \theta w; \bar{v})\bigg|_{\theta=0} \\
		= \int_0^T \int_{\R^d} (\partial_{\tilde{\gamma}}f \cdot \check{\gamma}_t\cdot m^{v^*, \bar{v}} + \partial_vf \cdot w  \cdot m^{v^*, \bar{v}} + f\cdot \check{m})dxdt \\
		+ \int_{\R^d} (\partial_{\tilde{\psi}}\tcost\cdot \check{\psi}_T\cdot m^{v^*,\bar{v}}(T,x) + \tcost\cdot\check{m}(T,x))dx,
	\end{multline}
	where $\check{m}$, $\check{\gamma}_t$, $\check{\varphi}_t$, and $\check{\psi}_T$ are the first variations given by
	\begin{equation} \label{eq:first-variation}
		\begin{aligned}
			 & \partial_t \check{m} + A^*\check{m} + \operatorname{div}(\drift\check{m})
			\\
			 & \quad + \operatorname{div}\left( (\partial_{\tilde{\varphi}}\drift\cdot \check{\varphi}_t + \partial_v \drift\cdot w)m^{v^*,\bar{v}} \right) = 0, \check{m}(0,x) = 0, \\
			 & \check{\gamma}_t = \int_{\R^d} \gamma(t,\xi, v^*)\check{m}(t,\xi)d\xi
			\\
			 & \qquad+ \int_{\R^d} \partial_v \gamma(t,\xi, v^*) w(t,\xi)m^{v^*,\bar{v}}(t,\xi)d\xi,
			\\
			 & \check{\varphi}_t = \int_{\R^d} \varphi(t,\xi, v^*)\check{m}(t,\xi)d\xi
			\\
			 & \qquad + \int_{\R^d} \partial_v \varphi(t,\xi, v^*) w(t,\xi)m^{v^*,\bar{v}}(t,\xi)d\xi,
			\\
			 & \check{\psi}_T = \int_{\R^d} \psi(\xi) \check{m}(T,\xi)d\xi.
		\end{aligned}
	\end{equation}
	Substituting the variations $\check{\gamma}_t$ and $\check{\psi}_T$ into the G\^ateaux derivative and using the definitions of $K_t^{\gamma}$ and $K_T^{\psi}$, we have
	\begin{align}
		 & \quad \int_0^T \int_{\R^d} \partial_{\tilde{\gamma}} f \cdot \check{\gamma}_t \cdot m^{v^*,\bar{v}} dx dt                                                      \\
		 & = \int_0^T \int_{\R^d} K^\gamma_t \cdot \left( \gamma(t,x, v^*) \check{m} + \partial_v \gamma(t,x, v^*) w m^{v^*,\bar{v}} \right) dx dt                              \\
		 & \text{and}\quad \int_{\R^d} \partial_{\tilde{\psi}} \tcost \cdot \check{\psi}_T \cdot m^{v^*, \bar{v}}(T, x) dx = \int_{\R^d} K^\psi_T \cdot \psi(x) \check{m}(T, x) dx.
	\end{align}
	It follows that
	\begin{equation} \label{eq:J-derivative}
		\begin{aligned}
			 & \quad\frac{d}{d\theta}J(v^* + \theta w; \bar{v})\bigg|_{\theta=0}                                                                                                                           \\
			 & = \int_0^T \int_{\R^d} \left[ \left( \partial_v f + K^\gamma_t \partial_v \gamma \right) w m^{v^*,\bar{v}} + \left( f + K^\gamma_t \cdot \gamma \right) \check{m} \right] dx dt \nonumber \\
			 & \quad + \int_{\R^d} \left( \tcost + K^\psi_T \cdot \psi \right) \check{m}(T,x) dx.
		\end{aligned}
	\end{equation}
	To eliminate $\check{m}$, we take the inner product of the equation of $u$ in~\eqref{eq:MFC-PDE} with $\check{m}$ over $\R^d \times [0, T]$. 
    Applying integration by parts and substituting the equation of $\check{m}$ in \eqref{eq:first-variation} yields
	\begin{multline}
		\int_0^T \int_{\R^d} \left( f + \gamma \cdot K^\gamma_t \right) \check{m} dx dt = -\int_{\R^d} u(T,x) \check{m}(T,x) dx \\
		+ \int_0^T \int_{\R^d} \left( D_xu \cdot \partial_v \drift + K^\varphi_t \partial_v \varphi \right) w m^{v^*,\bar{v}} dx dt.
	\end{multline}
	Using this relation, the cost variation simplifies to
	\begin{align*}
		 & \frac{d}{d\theta}J(v^* + \theta w; \bar{v})\bigg|_{\theta=0}
		\\
		 & = \int_0^T \int_{\R^d} \big[ \partial_v f + D_xu \cdot \partial_v \drift
		\\
		 & \qquad + \partial_v \gamma K^\gamma_t + \partial_v \varphi K^\varphi_t \big] w m^{v^*,\bar{v}} dx dt
		\\
			 & \qquad + \int_{\R^d} \left( \tcost + K^\psi_T \cdot \psi - u(T,x) \right) \check{m}(T,x) dx.
	\end{align*}
	The terminal condition of the equation for $u$ in~\eqref{eq:MFC-PDE} dictates that the second integral vanishes. 
    Since $v^*$ is optimal, $\frac{dJ}{d\theta}\big|_{\theta=0} = 0$ for an arbitrary variation $w$.
    By the fundamental lemma of the calculus of variations, the bracketed term must be identically zero almost everywhere, which yields the optimality condition~\eqref{eq:optimality-condition-MFC}.
\end{proof}

\begin{proof}[Proof of Theorem~\ref{thm:general_mfcg}]
	Let $\hat{v}$ be an equilibrium control. 
    By the optimality condition in Definition~\ref{def:equilibrium}, $\hat{v}$ is an optimal feedback control that minimizes \eqref{eq:objective-functional-1} subject to \eqref{eq:KFP-eqn}. 
    Applying Lemma~\ref{lem:PDE-extended-MFC} with $v^*=\hat{v}$, we find that $\hat{v}$ satisfies \eqref{eq:optimality-condition-MFC}.
    Furthermore, since both the population and the individual player use the control $\hat{v}$, we have $\tilde{\varphi}_t^{v^*,\bar{v}} = \tilde{\varphi}_t^{\hat{v},\hat{v}} = \bar{\varphi}_t^{\hat{v}}$, $\tilde{\gamma}_t^{v^*,\bar{v}} = \tilde{\gamma}_t^{\hat{v},\hat{v}} = \bar{\gamma}_t^{\hat{v}}$, and $\tilde{\psi}_T^{v^*,\bar{v}} = \tilde{\psi}_T^{\hat{v},\hat{v}} = \bar{\psi}_T^{\hat{v}}$.
    Hence, equations~\eqref{eq:optimality-condition-MFC}--\eqref{eq:MFC-PDE} rewrite as equations~\eqref{eq:optimality-condition-MFCG}--\eqref{eq:MFCG-PDE}. 
    This completes the proof.
\end{proof}

\begin{proof}[Proof of Theorem~\ref{thm:lq_mfcg}]
	The proof is divided into two steps.

	\textbf{Step 1: Solving the individual MFC with a fixed population evolution.}
	In the linear-quadratic framework, fixing the distribution flow means that we fix $(\bar{X}_t)_{t\in[0,T]}$ and $(\bar{v}_t)_{t\in[0,T]}$. 
    Viewing them as given processes, we obtain an MFC problem with slight modifications to the coefficients of the linear terms:
	\begin{align}
		\begin{aligned}\label{eq:qqrr}
			 & \check{q}_t(\bar{X})=\frac 1 2 M_t\bar{X}_t+q_t, \quad \check{\tilde{q}}_t(\bar{X})=\frac 1 2 \tilde{M}_t\bar{X}_t+\tilde{q}_t, \\
			 & \check{r}_t(\bar{v})=\frac 1 2 N_t\bar{v}_t+r_t, \quad \check{\tilde{r}}_t(\bar{v})=\frac 1 2 \tilde{N}_t\bar{v}_t+\tilde{r}_t.
		\end{aligned}
	\end{align}
	The six additional terms in the cost functionals $F$ and $G$ (see~\eqref{eq:LQ-costs}), namely,
	$ \bar Q_t\bar{X}_t\cdot\bar{X}_t ,\bar{R}_t \bar{v}_t\cdot\bar{v}_t, 2 \bar{S}_t \bar{X}_t\cdot \bar{v}_t, 2\bar{q}_t\cdot\bar{X}_t , 2 \bar{r}_t\cdot \bar{v}_t ,  \bar{H} \bar{X}_T\cdot \bar{X}_T$,
	are not affected by the individual player and hence do not affect the optimal control. 
    By \cite[Theorem 2.6]{graber2016linear}, the optimal control is given by\footnote{Equations (2.31) and (2.32) of \cite[Theorem 2.6]{graber2016linear} are mutually inconsistent: the optimal control (2.32) contains the term $(B^\top+\bar{B}^\top)\phi$, whereas the trajectory equation (2.31), obtained by substituting (2.32) into the state dynamics, does not. Consistency requires the $\phi$-terms to be retained and requires $\phi$ to solve a backward ODE with $\phi_T=0$ rather than the forward integral stated after (2.27). We use the corrected system throughout. An omitted term in the expression for $\Sigma_0$ and an inaccuracy in the equation for $\phi$ are also reported in \cite[Footnote 7]{delsarto2024pollution}.}
	\begin{align}\label{eq:opti_control}
		 & \hat{v}_t(\bar{X},\bar{v})
		=-\Sigma_0^{-1}(t)\Lambda_0(t)(X_t-\tilde{X}_t)
		\\
		 & \qquad -\Sigma_1^{-1}(t) \big(\Lambda_1(t)\tilde{X}_t+\check{r}_t(\bar{v})+\check{\tilde{r}}_t(\bar{v})+B^\top_t\phi_t(\bar{X},\bar{v})\big),
	\end{align}
	where $\phi_t(\bar{X},\bar{v})$ satisfies
	{\small \begin{align}\label{eq:phi}
\dot\phi_t(\bar{X},\bar{v})=&-\bigl(A_t^\top-\Lambda_1^\top\Sigma_1^{-1}B_t^\top\bigr)\phi_t-\frac{1}{2}(M^\top_t+\tilde{M}^\top_t)\bar{X}_t \\
&+\Lambda_1^\top(t)\Sigma_1^{-1}(t)\big(r_t+\tilde r_t+\frac{1}{2}(N^\top_t+\tilde{N}^\top_t)\bar{v}_t\big)-(q_t+\tilde q_t),\\
\phi_T(\bar{X},\bar{v})= &0.
	\end{align}}
    Here, $P$ and $\Pi$ are the unique solutions of equations~\eqref{eq:P} and~\eqref{eq:Pi}, respectively.  
    Their existence and uniqueness are guaranteed by \cite[Theorem 2.6]{graber2016linear} under Assumption~\ref{assum:coeff}.

	\textbf{Step 2: Matching the consistency condition at equilibrium.}
	From Step 1, we know that under a given population mean flow $(\bar{X},\bar{v})$, the mean flow of the optimal controlled intra-group state $\tilde{X}$ satisfies the ODE
	\begin{equation}
		\begin{cases}
			\dot{\tilde{X}}_t=\big(A_t-B_t\Sigma_1^{-1}(t)\Lambda_1(t)\big)\tilde{X}_t \\
			\quad - B_t \Sigma_1^{-1}(t)\big(\check{r}_t(\bar{v}_t)+\check{\tilde{r}}_t(\bar{v}_t)
			+ B_t^\top\phi_t\big),
			\\
			\tilde{X}_0= \int_{\RR^d}x m_0(x)dx,
		\end{cases}
		\label{eq:opti_dynamics}
	\end{equation}
	and the mean process of the optimal control is
	\begin{align}
		\tilde{v}_t=\mathbb{E}[\hat{v}_t(\bar{X},\bar{v})]
		 & =-\Sigma_1^{-1}(t)\big(\Lambda_1(t)\tilde{X}_t+\frac 1 2(N_t+\tilde{N}_t)\bar{v}_t \notag
		\\
		 & \quad \quad +r_t+\tilde{r}_t+B^\top_t\phi_t(\bar{X},\bar{v})\big).
		\label{eq:opti_ealpha}
	\end{align}
	In view of \eqref{eq:phi}, \eqref{eq:opti_dynamics} and \eqref{eq:opti_ealpha}, it is seen that \eqref{eq:coupled_z_a} is obtained by letting for any $t\in[0,T]$, $\tilde{X}_t=\bar{X}_t, \tilde{v}_t=\bar{v}_t$. Then if we take $(z_t)_{t\in[0,T]}$ and $(a_t)_{t\in[0,T]}$ as the given mean processes of the state and of the control for the whole population respectively, we must have $\tilde{X}_t=z_t, \tilde{v}_t=a_t$.
    Hence the consistency condition is satisfied.
\end{proof}

\end{document}